\DeclareMathOperator{\Span}{Span}
\DeclareMathOperator{\Lie}{Lie}
\DeclareMathOperator{\Ad}{Ad}
\DeclareMathOperator{\ad}{ad}
\DeclareMathOperator{\PWC}{PWC}
\newcommand{\e}{\varepsilon}
\newcommand{\A}{\mathcal A}
\newcommand{\C}{{\mathbb C}}
\newcommand{\R}{{\mathbb R}}
\newcommand{\N}{{\mathbb N}}
\newcommand{\Z}{{\mathbb Z}}
\newcommand{\g}{{\mathfrak g}}
\newcommand{\li}{{\mathfrak l}}
\newcommand{\sll}{{\mathfrak {sl}}}
\newtheorem{theorem}{Theorem}
\newtheorem{lemma}{Lemma}
\newtheorem{prop}{Proposition}
\newtheorem{defi}{Definition}
\title{Good Lie Brackets for classical and quantum harmonic oscillators}
\author{Andrei Agrachev\thanks{SISSA, via Bonomea 265, 34136 Trieste, Italy (agrachev@sissa.it)} \and Bettina Kazandjian\thanks{Sorbonne Université, Université Paris Cité, CNRS, Inria, Laboratoire Jacques-Louis Lions, Paris, France (bettina.kazandjian@sorbonne-universite.fr)} \and Eugenio Pozzoli\thanks{Univ Rennes, CNRS, IRMAR - UMR 6625, F-35000 Rennes, France (eugenio.pozzoli@univ-rennes.fr)}}
\begin{document}

\maketitle

\begin{abstract}
We study the small-time controllability problem on the Lie groups $SL_2(\R)$ and $SL_2(\R)\ltimes H_{d}(\R)$ with Lie bracket methods (here $H_{d}(\R)$ denotes the $(2d+1)$-dimensional real Heisenberg group). Then, using unitary representations of $SL_2(\R)\ltimes H_{d}(\R)$ on $L^2(\R^d,\C)$ and $L^p(T^*\R^d,\R), p\in[1,\infty)$, we recover small-time approximate reachability properties of the Schrödinger PDE for the quantum harmonic oscillator, and find new small-time approximate reachability properties of the Liouville PDE for the classical harmonic oscillator. 
   
\end{abstract}


\section{Introduction}
\subsection{The model}
Let $G$ be a connected Lie group with Lie algebra $\g$. In this paper, we study left-invariant control affine systems of the form
\begin{equation} \label{eq1}
    \dot{q}(t)=q(t)\left(a+\sum_{i=1}^k u_i(t) b_i\right)\in T_{q(t)}G, \quad q\in G, a ,b_i \in \g , t \in [0,T].
\end{equation}
The controls are real-valued and piece-wise constant, $u=(u_1,...,u_k) \in \PWC([0,T],\R^{k})$. The solution of (\ref{eq1}) associated with the initial condition $q_0$ and the control $u$ at time $t$ is denoted by $q(q_0,u,t)$. When the initial condition $q_0={\rm id}_G$ is the identity of $G$, we drop it from the notation and simply write the associated solution as $q(u,t)$. 
\begin{defi} \label{def1}
\begin{itemize}
    \item An element $q \in G$ is \textbf{reachable} by system (\ref{eq1}) if there exist $T >0$ and $u \in \PWC([0,T],\R^{k})$ such that $q(u,T)=q$. The set of reachable elements is called the attainable set and it is denoted by $\A$.
    \item An element $q \in G$ is \textbf{approximately reachable} by system (\ref{eq1}) if $q \in \overline{\A}$.
    \item An element $q \in G$ is \textbf{small-time reachable} by system (\ref{eq1}) if for every $T>0$, there exists $\tau \in [0,T]$ and $u \in \PWC([0,\tau],\R^{k})$ such that $q(u,\tau)=q$. The set of small time reachable elements is denoted by $\A_{st}$.
    \item An element $q \in G$ is \textbf{small-time approximately reachable} by system (\ref{eq1}) if $q \in \overline{\A_{st}}$.
\end{itemize}
\end{defi}
\noindent The system is said to be \emph{controllable} (respectively \emph{approximately controllable}) if $\A=G$ (resp. if $\overline{\A}=G$). The system is said to be \emph{small-time controllable} (respectively \emph{small-time approximately controllable}) if $\A_{st}=G$ (resp. if $\overline{\A_{st}}=G$).

Note that the set $\A$ presented in \cref{def1} is the attainable set from the identity. Since system (\ref{eq1}) is left invariant, it is controllable from the identity if and only if it is controllable from any point of the group. The same statement holds true for small-time controllability.
\subsection{Good Lie brackets}
In this work we use the following notions of geometric control theory.
\begin{defi}
    An element $X \in \g$ is said to be a \emph{good Lie Bracket} for system (\ref{eq1}) if $e^{vX}\in \overline{\A_{st}}$ for every $v \in \R$. 
\end{defi}

\begin{defi}
    An element $X \in \g$ is said to be \emph{compatible} with system (\ref{eq1}) (at time $t$) if $e^{X}$ is approximately reachable by system (\ref{eq1}) (at time $t$). Another control system is said to be compatible with system (\ref{eq1}) if the closure of its attainable set is contained in $\overline{\A}$.
\end{defi}

It is well-known that every element in the (finite-dimensional) Lie algebra $\li:=\Lie \: \{b_1,...,b_k\}$, generated by $b_1,...,b_k$, is a good Lie bracket (see e.g. \cite[Lemma 6.2]{lierank} and \cite[Theorem 3.3]{small-time-domenico}).


It is also well-known that, if the drift of (\ref{eq1}) is recurrent or Poisson stable, then $-v a, v>0,$ is compatible with the system for a time large enough (see e.g. \cite[Proposition 8.2]{AS}). In this work, we shall study systems where the drift $a$ is a good Lie Bracket, a stronger property than compatibility, since the system is able to approximate the movement of the drift not only with positive and negative coefficients, but also in arbitrarily small times. The notion of good Lie bracket was recently introduced by the first author for more general control affine systems \cite{good-lie-brackets}, and studied also in the context of bilinear Schrödinger PDEs by Karine Beauchard and the third author \cite{beauchard-Pozzoli,beauchard-Pozzoli2}.

\subsection{A small-time controllability result on $SL_2\ltimes H_{d}$}
We introduce the following Lie algebras: 
\begin{itemize}
    \item $\frak{sl}_2(\R):=\left\{M\in {\rm Mat}_2(\R) \mid {\rm tr}(M)=0\right\}$;
        \item $\frak{h}_{d}(\R):=\left\{\begin{pmatrix}
        0&\vec{x}&z\\
        0&0_{d}&\vec{y}^\dagger\\
        0&0&0
    \end{pmatrix}\in {\rm Mat}_{d+2}(\R)\mid\vec{x},\vec{y}\in \R^d, z \in \R \right\}$.
\end{itemize}
Here, $0_d$ denotes the $d$-dimensional square zero matrix, $\vec{x}$ is a row $d$-dimensional vector and $\vec{y}^\dagger$ is a column $d$-dimensional vector. To ease notations, we shall simply denote them as $\frak{h}_{d}$ and ${\frak sl}_2$. Let $\frak{der}(\frak{h}_{d})$ denote the derivations over the algebra $\frak{h}_{d}$. We define a homomorphism $\rho:\frak{sl}_2\to \frak{der}(\frak{h}_{d})$ as 
$$\rho\begin{pmatrix}
        \alpha&\beta\\
        \gamma&-\alpha\\
    \end{pmatrix}\begin{pmatrix}
        0&\vec{x}&z\\
        0&0_{d}&\vec{y}^\dagger\\
        0&0&0
    \end{pmatrix}=\begin{pmatrix}
        0&\alpha\vec{x}+\beta \vec{y}&0\\
        0&0_{d}&\gamma \vec{x}^\dagger-\alpha\vec{y}^\dagger\\
        0&0&0
    \end{pmatrix}.$$
It induces a semi-direct product structure on $\frak{sl}_2\oplus \frak{h}_{d}$, called $\frak{sl}_2\ltimes_{\rho} \frak{h}_{d}$, with bracket
 \begin{equation}\label{eq:semi-direct-product}
 [(a,X),(b,Y)]=([a,b],[X,Y]+\rho(a)Y-\rho(b)X),\quad \forall a,b\in {\frak sl}_2, X,Y\in {\frak h}_d.
 \end{equation}
  We shall simply denote this Lie algebra by $\frak{sl}_2\ltimes \frak{h}_{d}$, and its associated Lie group by $SL_2\ltimes H_{d}$. We also fix basis $\{a,b,c\}$ of $\frak{sl}_2$ and $\{X_1,Y_1,\dots,X_d,Y_d,Z\}$ of $\frak{h}_{d}$ in such a way that the following commutation relations are satisfied $$[b,a]=c,[a,c]=2a,[b,c]=-2b,$$ 
$$[X_i,Y_i]=Z,[X_i,Z]=[Y_i,Z]=0, \quad i=1,\dots,d.$$ 
We first study the controllability of the following left-invariant control-affine system:
\begin{equation}\label{eq:SL}
\frac{d}{dt}q(t)=q(t)\left(a+u_0(t)b+\sum_{i=1}^d u_i(t)X_i+r(t)Z\right),\quad q\in SL_2\ltimes H_{d}.
\end{equation}
Our first result is the following small-time controllability property.
\begin{theorem}\label{thm:SL}
Equation \eqref{eq:SL} is small-time controllable on $SL_2\ltimes H_{d}$. Moreover, equation \eqref{eq:SL} with $u_i,r\equiv 0$ for $i\in\{1,\dots,d\}$ is small-time controllable on $SL_2$.
\end{theorem}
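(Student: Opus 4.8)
\emph{Plan of proof.} The strategy is to show that the set $\mathcal{G}\subseteq\g$ of good Lie brackets exhausts the whole Lie algebra, since $\mathcal{G}=\g$ is equivalent to $\exp(\g)\subseteq\overline{\A_{st}}$ and hence, $SL_2\ltimes H_d$ being connected, to small-time controllability. The starting point is the semigroup structure of $\overline{\A_{st}}$: by left-invariance $\overline{\A_{st}}$ is a closed subsemigroup of the group, and combining the Trotter and commutator product formulas with this semigroup property shows that $\mathcal{G}$ is a closed Lie subalgebra of $\g$, namely the edge $\mathcal{C}\cap(-\mathcal{C})$ of the closed convex tangent cone (Lie wedge) $\mathcal{C}$ to $\overline{\A_{st}}$ at the identity; equivalently, $\mathcal{G}$ is the Lie algebra of the largest subgroup contained in $\overline{\A_{st}}$. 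By the quoted fact that $\Lie\{b,X_1,\dots,X_d,Z\}$ consists of good Lie brackets, and since $[b,X_i]=Y_i$ and $[X_i,Y_i]=Z$, we already have $\R b\oplus\h_d\subseteq\mathcal{G}$.

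The next step is to capture the remaining $\sll_2$-directions. The only control field acting inside the $\sll_2$-factor is $b$, so I would exploit the drift through conjugation: since $b\in\mathcal{G}$, both $e^{\pm sb}$ lie in $\overline{\A_{st}}$, so $\Ad(e^{sb})$ preserves $\overline{\A_{st}}$ and therefore the cone $\mathcal{C}$. As $a\in\mathcal{C}$ (the drift is reachable forward in arbitrarily small time), this gives $\Ad(e^{sb})a=a+sc-s^2b\in\mathcal{C}$ for every $s\in\R$. Now I would use only that $\mathcal{C}$ is a closed convex cone containing this parabola together with $\pm b$: convex combinations yield $a+\sigma c-\tau b\in\mathcal{C}$ for all $\tau\ge\sigma^2$, adding multiples of $\pm b$ frees the $b$-coefficient, and rescaling by $1/\lambda$ and passing to the limit yields $\Span\{b,c\}\subseteq\mathcal{C}$. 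Being a subspace inside $\mathcal{C}$, it lies in the edge, so $\Span\{b,c\}\subseteq\mathcal{G}$. Consequently the closed subalgebra $\Span\{b,c\}\oplus\h_d$ is contained in $\mathcal{G}$, and its exponential, the parabolic subgroup $P:=B_0^-\ltimes H_d$ (with $B_0^-=\exp\Span\{b,c\}$ the identity component of a Borel of $SL_2$), lies in $\overline{\A_{st}}$.

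To finish I would promote ``forward drift plus $P$'' to the whole group by a transitivity argument. The key elementary lemma is: if a subsemigroup $S$ of a group $G$ contains a subgroup $P$ and acts transitively on $G/P$, then $S=G$ (indeed, for $g\in G$ pick $s\in S$ with $sP=gP$; then $g^{-1}s\in P$, so $s^{-1}g\in P\subseteq S$ and $g=s\,(s^{-1}g)\in S$). Here $G/P=SL_2/B_0^-\cong S^1$ is the variety of oriented flags (rays in $\R^2$), on which $S=\overline{\A_{st}}$ acts through its $SL_2$-projection. The subgroup $P$ fixes one ray and has the two open half-circles as its remaining orbits, while the forward drift $e^{\e a}$ moves the fixed ray off itself; interleaving these two motions — each realizable in arbitrarily small time, $P$ by large-amplitude controls and the drift in time $\e\to0$ chosen small at each step — sweeps out the entire circle, so $S$ acts transitively on $S^1$. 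The lemma then gives $\overline{\A_{st}}=SL_2\ltimes H_d$, the asserted small-time controllability; the second statement is the $SL_2$-projection of the same argument (the case $\h_d=0$).

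The main obstacle is the one-sidedness of the drift: at first order $-a\notin\mathcal{C}$, the element $a-b$ is elliptic but its rotation cannot be accelerated because the coefficient of $a$ is frozen at $1$, so the classical recurrence argument only yields large-time compatibility of $-a$ and the naive ``brackets of the drift'' calculus stalls. Overcoming this is exactly the role of the two devices above: the convexity argument extracts $c$ without ever exponentiating the drift, and the transitivity argument shows that a single forward direction transverse to the Borel already generates the full group in small time. The points requiring genuine care are the Lie-semigroup prerequisites — that $\overline{\A_{st}}$ is closed, that its edge is a subalgebra equal to $\mathcal{G}$, and that $\Ad$ by edge elements preserves $\mathcal{C}$ — together with the verification that the finitely many small drifts used in the transitivity step can be taken with total duration below any prescribed $T$.
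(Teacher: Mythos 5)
Your proposal contains one genuine gap, in the $H_d$-part of the statement. You assert $[b,X_i]=Y_i$, but in this algebra $[b,X_i]=0$: it is the \emph{drift} that produces the momentum directions, via $[a,X_i]=Y_i$. Hence $\Lie\{b,X_1,\dots,X_d,Z\}=\Span\{b,X_1,\dots,X_d,Z\}$ does not contain the $Y_i$, and your claim $\R b\oplus\h_d\subseteq\mathcal G$ is false as derived; worse, the subalgebra generated by $\{b,c,X_1,\dots,X_d,Z\}$ is still $Y_i$-free, so no amount of bracketing inside your edge recovers $Y_i$. The missing step is precisely the one the paper performs: conjugate the drift by the controlled flow, $\Ad_{e^{vX_i}}(\tau a)=\tau a+v\tau Y_i-\tfrac{v^2\tau}{2}Z$, cancel the $Z$-term using the control $r$, set $v=r/\tau$ and let $\tau\to0$ --- the same device you use to extract $c$ from $\Ad_{e^{vb}}(\tau a)$, so the repair is within your own toolbox, but it must be done. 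Two further points need attention. First, $\overline{\A_{st}}=G$ is only small-time \emph{approximate} controllability; the theorem claims $\A_{st}=G$, and the passage requires Krener's theorem for Lie-bracket-generating systems (Proposition~\ref{prop1} in the paper), which you never invoke. Second, ``$a\in\mathcal C$'' is delicate if $\mathcal C$ is literally the Lie wedge of $\overline{\A_{st}}$: $e^{ta}$ is reachable in time $t$, not in arbitrarily small time, so $a$ need not lie in that wedge a priori. The rigorous substitute is the time-indexed statement that convex combinations of $\Ad_L(\tau a)+\li$ are compatible \emph{at time} $\tau$ (Theorem~\ref{th1}), which does support exactly the convexity, absorption-of-$b$, and rescaling manipulations you perform.

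Modulo these repairs, your treatment of the $\sll_2$-factor is sound and its endgame is genuinely different from the paper's. The extraction of $c$ (convex hull of the parabola $a+sc-s^2b$, absorb $b$, rescale and let the coefficient of $a$ tend to $0$) is the paper's computation in Lie-wedge clothing. But where the paper then shows $wa$ is compatible for every $w>0$ via $\Ad_{e^{vc}}(a)=e^{-2v}a$, writes every element in Iwasawa form $e^{t_1(a-b)}e^{t_2c}e^{t_3b}$, and uses periodicity of $e^{t(a-b)}$ on non-universal covers to reduce to $t_1\ge0$, you pass to the flag circle $SL_2/B_0\cong S^1$ and run a semigroup-transitivity argument with the clean lemma ``a subsemigroup containing a subgroup $P$ and acting transitively on $G/P$ is the whole group.'' Both routes exploit the same underlying circle; yours avoids the Iwasawa decomposition and the compatibility of $wa$ altogether, at the price of the extra bookkeeping you yourself flag (finitely many drift arcs of total duration below any prescribed $T$, and transitivity achieved with arbitrarily small drift increments). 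Reassuringly, your argument correctly degenerates on the universal cover, where $\widetilde{SL}_2/B_0$ unwinds to $\R$ and the one-sided drift cannot act transitively --- consistent with the paper's restriction to non-universal covers.
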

We remark that the second part of the theorem states the small-time controllability of a scalar-input system with drift on a non-compact connected Lie group $SL_2$: such a result would be impossible on compact Lie groups such as $SU(N), N\geq 2$, where scalar-input systems with drift are never small-time controllable (see, e.g., \cite{chambrion-agrachev,time-zero} and we refer also to \cite{gauthier-rossi} for recent advances of the subject).
\subsection{Consequences for harmonic oscillator Schrödinger equation}

The choice of studying $SL_2\ltimes H_{d}$ in this article is motivated by the description of two physical systems, namely the quantum and the classical harmonic oscillators. In this section we state the consequences for quantum harmonic oscillators. The relation between $SL_2$ and the Schrödinger equation for the harmonic oscillator is known as the Weil representation (see e.g. \cite[Chapter XI]{Lang}). \\

The Schrödinger equation for the quantum harmonic oscillator is given by
\begin{equation}\label{eq:schro}
i\partial_t\psi(t,x)=\left(-\frac{1}{2}\Delta + u_0(t)\frac{|x|^2}{2}+\sum_{j=1}^du_j(t)x_j\right)\psi(t,x),  \qquad \psi (t=0)=\psi_0 \in L^2(\R^d,\C),\end{equation}
with control $u=(u_0,\dots,u_d) \in \PWC([0,T],\R^{d+1})$. The control $u_0$ models the frequency tuning of the trapping potential, and the $u_j,j=1,\dots,d$ model the dipolar interaction with the spacial positions of the oscillator. Such system is among the most relevant quantum dynamics, widely used in physics and chemistry to model a variety of situations, such as atoms trapped in optical cavities or vibrational dynamics of molecular bonds.

The Lie algebra generated by the linear operators $\Lie \: \{\frac{i\Delta}{2},\frac{i|x|^2}{2}\}$ equipped with the commutator is isomorphic to $\sll_2(\R)$, the algebra of real $2\time 2$ traceless matrices, and the Lie algebra generated by $\Lie \: \{\frac{i\Delta}{2},\frac{i|x|^2}{2},ix_1,\dots,ix_d\}$ is isomorphic to ${\frak sl}_2 \ltimes{\frak h}_{d}$. We thus have introduced infinite-dimensional representations of the algebras ${\frak sl}_2,{\frak sl}_2 \ltimes{\frak h}_{d}$ acting on $L^2(\R^d,\C)$.

Quantum states are defined up to global phases, hence we shall say that a state $\psi_1$ is reachable if there exists a number $\theta\in[0,2\pi)$ such that $e^{i\theta}\psi_1$ is reachable. Since the generator $-\frac{1}{2}\Delta + u_0\frac{|x|^2}{2}+\sum_{j=1}^du_jx_j$ is essentially self-adjoint on $C^\infty_c(\R^d,\C)$ for any $u_0,\dots,u_d\in \R^{d+1}$ (see, e.g., \cite[Corollary page 199]{rs2}), equation \eqref{eq:schro} is globally-in-time well-posed, meaning that for any $\psi_0\in L^2(\R^d,\C),t\geq 0, u\in PWC([0,t],\R^{d+1})$ there exists a unique mild solution $\psi(t,u,\psi_0)\in C(\R,L^2(\R^d,\C))$ of \eqref{eq:schro}.
\begin{defi}
A state $\psi_1\in L^2(\R^d,\C)$ is said to be small-time approximately reachable from $\psi_0\in L^2(\R^d,\C)$ for system \eqref{eq:schro} if for every $\varepsilon>0$ there exist a time $\tau\in [0,\varepsilon]$, a control $u\in PWC([0,\tau],\R^{d+1})$ and a global phase $\theta\in[0,2\pi)$ such that the solution $\psi$ of \eqref{eq:schro} satisfies $\|\psi(\tau,u,\psi_0)-e^{i\theta}\psi_1\|_{L^2}<\varepsilon$.
\end{defi}

We have the following consequence of Theorem \ref{thm:SL}.
\begin{theorem}\label{thm:schro}
Each state of the family $\{e^{is\Delta}e^{i(\alpha |x|^2+p x)}\sigma^{d/2}\psi_0(\sigma x+\beta),s,\alpha\in\R,p,\beta\in \R^d,\sigma>0\}$ is small-time approximately reachable from $\psi_0\in L^2(\R^d,\C)$ for system \eqref{eq:schro}.
\end{theorem}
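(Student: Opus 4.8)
The plan is to realize the stated family as a single group orbit and to transport Theorem~\ref{thm:SL} through a unitary representation. First I would construct the (projective) unitary representation $\Pi$ of $SL_2\ltimes H_d$ on $L^2(\R^d,\C)$ whose differential sends the abstract generators to the operators appearing in \eqref{eq:schro}: concretely $a\mapsto \tfrac{i}{2}\Delta$, $b\mapsto -\tfrac{i}{2}|x|^2$, $X_j\mapsto -ix_j$, and $Z$ to a central imaginary scalar. On the $SL_2$ factor this is the Weil/metaplectic representation (a genuine representation only of the metaplectic double cover, hence projective for $SL_2$), tensored with the Schrödinger representation of $H_d$; the sign/phase ambiguity is harmless because states are identified up to a global phase. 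I would check that $d\Pi$ respects the relations $[b,a]=c$, $[a,c]=2a$, $[b,c]=-2b$ and $[X_j,Y_j]=Z$, so that $\Pi$ is well defined and strongly continuous, and that the forced relation $Y_j\mapsto \p_j$ is consistent with the Heisenberg bracket.

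Then, which is the conceptual core, I would intertwine the two control problems. For a piecewise-constant control the solution of \eqref{eq:SL} from the identity is an ordered product $q=\exp(\tau_1\xi_1)\cdots\exp(\tau_n\xi_n)$ with $\xi_i=a+u_0^{(i)}b+\sum_j u_j^{(i)}X_j+r^{(i)}Z$, whereas the propagator of \eqref{eq:schro} produces $\psi(\tau)=e^{\tau_n d\Pi(\xi_n)}\cdots e^{\tau_1 d\Pi(\xi_1)}\psi_0=\Pi(\tilde q)\psi_0$ (up to a phase), where $\tilde q$ is the \emph{reversed} product. The apparent left/right mismatch is resolved by noting that reversing the order of the control pieces keeps each $\xi_i$ admissible — in particular the drift coefficient stays equal to $1$ — so the set of reversed products coincides with $\A_{st}$. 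Hence $\psi(\tau,u,\psi_0)=e^{i\theta}\Pi(g)\psi_0$ with $g$ ranging exactly over $\A_{st}$.

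Next I would invoke Theorem~\ref{thm:SL}, namely $\overline{\A_{st}}=SL_2\ltimes H_d$. Since $g\mapsto\Pi(g)\psi_0$ is continuous and the controls realizing $\A_{st}$ act in arbitrarily small time, every element of the orbit $\{\Pi(g)\psi_0:g\in SL_2\ltimes H_d\}$ is small-time approximately reachable, up to a global phase, for \eqref{eq:schro}. It then remains to identify this orbit with the stated family. Using a Gauss/Bruhat factorization of the $SL_2$ factor together with Heisenberg coordinates, a general group element decomposes into one-parameter subgroups whose images under $\Pi$ are the elementary transformations: $a$ yields the free evolution $e^{is\Delta}$, $b$ the quadratic phase $e^{i\alpha|x|^2}$, the bracket $c=[b,a]$ the dilation $\psi\mapsto\sigma^{d/2}\psi(\sigma x)$, $X_j$ the modulation $e^{ipx}$, $Y_j$ the translation $\psi\mapsto\psi(x+\beta)$, and $Z$ a global phase. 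Recombining these — absorbing commutator phases and the central $Z$-direction into the allowed global phase, which also explains why the $(2d{+}3)$-parameter family matches the $(2d{+}4)$-dimensional group modulo its center — yields exactly $e^{is\Delta}e^{i(\alpha |x|^2+p x)}\sigma^{d/2}\psi_0(\sigma x+\beta)$, proving the theorem.

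The main obstacle I anticipate is the rigorous handling of the representation on the unbounded generators: making precise that $\Pi$ is strongly continuous, that $\psi(\tau,u,\psi_0)=e^{i\theta}\Pi(\tilde q)\psi_0$ holds at the level of mild solutions (the intertwining of propagators), and controlling the cocycle/phase of the metaplectic representation. By contrast, the orbit identification is essentially a finite-dimensional computation once the dictionary between the generators and the elementary transformations is fixed.
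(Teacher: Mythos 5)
Your proposal is correct and follows essentially the same route as the paper: transport the small-time controllability of the finite-dimensional system through the Weil/metaplectic representation (with the same care you describe about the order reversal between the left-invariant product and the propagator) and then locate the stated family inside the resulting orbit. The only inessential differences are that the paper works with a genuine strongly continuous unitary representation of the metaplectic double cover --- on which Theorem \ref{th3} still yields small-time controllability --- instead of a projective representation of $SL_2$ with the cocycle absorbed into the global phase, and it uses the Iwasawa rather than a Gauss/Bruhat factorization for the orbit identification.
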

Such a result was already found in \cite{beauchard-Pozzoli} with analytical methods, and we propose here a more geometric proof, based on the unitary representations of $SL_2\ltimes H_{d}$. The statement of this theorem tells that we can control in small-times some physically relevant quantities, such as position, momentum, and spread of the initial state. The corresponding controllability analysis for system \eqref{eq:schro} with $u_0\equiv 1$ was performed in \cite{mirrahimi-rouchon}, and a weaker statement was obtained in \cite{teismann}. More general small-time controllability properties of Schrödinger PDEs were recently obtained in \cite{beauchard-Pozzoli2}.

\subsection{Consequences for harmonic oscillator Liouville equation}
In this section we state the consequences for classical harmonic oscillators. The Liouville equations for classical harmonic oscillator is given by the transport of a density $\rho\in L^p(T^*\R^d)$ along an Hamiltonian vector field
\begin{equation}\label{eq:liouville}
\partial_t \rho(t,q,p)=\vec{H}_u\rho(t,q,p), \quad \rho(t=0)=\rho_0\in L^p(T^*\R^d)
\end{equation}
where the Hamiltonian function is given by
\begin{equation}\label{eq:hamiltonian}
H_u(q,p)=\frac{|p|^2}{2}+u_0(t)\frac{|q|^2}{2}+\sum_{j=1}^du_j(t)q_j, 
\end{equation}
and the Hamiltonian vector field is the first-order differential operator defined as 
\begin{equation}\label{eq:hamiltonian-field}
\vec{H}_u:=\{H_u,\cdot\}=p\cdot\nabla_q-u_0(t)q\cdot\nabla_p-\sum_{j=1}^mu_j(t)\partial_{p_j},
\end{equation}
and $\{\cdot,\cdot\}$ denotes the Poisson bracket on $T^*\R^d$ (that is, $\{f,g\}=\partial_pf\partial_qg-\partial_q f\partial_pg$ for any $f=f(q,p),g=g(q,p)\in C^\infty(T^*\R^d)$). The Lie algebra of smooth functions generated by $\Lie \: \{\frac{|p|^2}{2},\frac{|q|^2}{2},q_1,\dots,q_d\}$, equipped with the Poisson bracket, is also isomorphic to $\sll_2\ltimes {\frak h_{d}}$. Since the vector field \eqref{eq:hamiltonian-field} is globally Lipschitz, the solution of the Liouville equation $\rho(t)$ is globally-in-time well-posed, meaning that for any $\rho_0\in L^p(T^*\R^d,\R),t\geq 0,u\in PWC([0,t],\R^{d+1})$ there exists a unique mild solution $\rho(t,u,\rho_0)\in C(\R,L^p(T^*\R^d,\R))$ of \eqref{eq:liouville}. Such a solution writes as $\rho(t,u,\rho_0)=\rho_0\circ \phi_{H_{u}}^t(q,p)$ where $\phi_{H_{u}}^t$ is the flow solving the Hamiltonian equations on $T^*\R^d$
\begin{equation}\label{eq:flow}
\frac{d\phi_{{H}_u}^t(q,p)}{dt}=\begin{pmatrix} \partial_p H_u(\phi_{{H}_u}^t(q,p))\\ -\partial_q H_u(\phi_{{H}_u}^t(q,p))  \end{pmatrix},\quad \phi_{{H}_u}^{t=0}(q,p)=(q,p).
\end{equation}
Notice that $\phi_{H_{u}}^t$ is an Hamiltonian diffeomorphism hence orientation- and volume-preserving.
\begin{defi}
A state $\rho_1\in L^p(T^*\R^d), p\in [1,\infty)$, is said to be small-time approximately reachable from $\rho_0\in L^p(T^*\R^d)$ for system \eqref{eq:liouville} if for every $\varepsilon>0$ there exist a time $\tau\in [0,\varepsilon]$ and a control $u\in PWC([0,\tau],\R^{d+1})$ such that the solution $\rho$ of \eqref{eq:liouville} satisfies $\|\rho(\tau,u,\rho_0)-\rho_1\|_{L^p}<\varepsilon$.
\end{defi}
We have the following consequence of Theorem \ref{thm:SL}.
\begin{theorem}\label{thm:liouville}
Each state of the family $\{\rho_0(\alpha (q+tp+s),\alpha^{-1}(p+rq+w))\mid \alpha>0,t,r\in\R, s,w\in\R^d\}$ is small-time approximately reachable from $\rho_0\in L^p(T^*\R^d)$ for system \eqref{eq:liouville}.
\end{theorem}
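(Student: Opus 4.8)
The plan is to realise the Liouville control system \eqref{eq:liouville} as the image of the finite-dimensional system \eqref{eq:SL} under a strongly continuous representation of $SL_2\ltimes H_d$ by isometries of $L^p(T^*\R^d,\R)$, and then to transport the small-time controllability of \cref{thm:SL}. First I would make the isomorphism $\Lie\{\tfrac{|p|^2}{2},\tfrac{|q|^2}{2},q_1,\dots,q_d\}\cong\sll_2\ltimes\h_d$ announced above explicit at the level of flows. Matching the drift $a\leftrightarrow\tfrac{|p|^2}{2}$, the control field $b\leftrightarrow\tfrac{|q|^2}{2}$ and $X_i\leftrightarrow q_i$, Hamilton's equations \eqref{eq:flow} show that the corresponding one-parameter subgroups act on $T^*\R^d$ by the affine symplectic maps
\[
\phi_a^t\colon(q,p)\mapsto(q+tp,\,p),\qquad
\phi_b^t\colon(q,p)\mapsto(q,\,p-tq),\qquad
\phi_c^\theta\colon(q,p)\mapsto(\alpha q,\,\alpha^{-1}p),
\]
where the bracket $c=[b,a]$ produces the squeeze with $\alpha>0$, while $X_i$ and $Y_i$ produce the momentum and position translations and the centre $Z\leftrightarrow 1$ acts trivially.

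I would then set $\pi(g)\rho:=\rho\circ\Phi_g$, where $\Phi_g$ is the affine symplectic diffeomorphism attached to $g\in SL_2\ltimes H_d$ by the action above. Since every $\Phi_g$ is volume-preserving (the flow \eqref{eq:flow} is Hamiltonian), each $\pi(g)$ is an isometry of $L^p$, and $g\mapsto\pi(g)\rho$ is strongly continuous for every $\rho\in L^p$ and every $p\in[1,\infty)$ --- this is exactly the place where finiteness of $p$ enters, through density of $C_c(T^*\R^d)$ and dominated convergence. The decisive structural step is the intertwining identity $\rho(\tau,u,\rho_0)=\pi(q(u,\tau))\rho_0$: because $u$ is piecewise constant, the flow $\phi_{H_u}^\tau$ is a finite composition of the autonomous flows listed above, and this composition matches, factor by factor, the product of exponentials that defines the endpoint $q(u,\tau)$ of \eqref{eq:SL} (the reversal of order caused by composition being absorbed by the left-invariance of \eqref{eq:SL}). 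Thus the two control systems are conjugate through $\pi$.

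With the intertwining in hand the theorem is immediate: given a target $\pi(g)\rho_0$, \cref{thm:SL} furnishes, for every $\varepsilon>0$, a time $\tau\le\varepsilon$ and a control driving \eqref{eq:SL} to (a point arbitrarily close to) $g$; strong continuity of $\pi$ then places $\rho(\tau,u,\rho_0)=\pi(q(u,\tau))\rho_0$ within $\varepsilon$ of $\pi(g)\rho_0$ in $L^p$, which is precisely small-time approximate reachability. It remains to recognise the displayed family as the orbit $\{\pi(g)\rho_0\}$: composing the squeeze (parameter $\alpha$), the drift-generated shear $q\mapsto q+tp$ (parameter $t$), the $b$-shear $p\mapsto p+rq$ (parameter $r$) and the translations in $q$ and $p$ (parameters $s,w$ coming from $Y_i$ and $X_i$) furnishes the displayed family. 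The hard part will be the intertwining together with the strong continuity of $\pi$: one must check rigorously that the Hamiltonian flow of the time-dependent piecewise-constant Hamiltonian \eqref{eq:hamiltonian} coincides with the representation of the group element produced by \eqref{eq:SL}, with the correct ordering and signs, and that $g\mapsto\pi(g)\rho_0$ is continuous in $L^p$; once these are secured, everything else is a direct transcription of \cref{thm:SL}.
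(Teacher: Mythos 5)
Your proposal is correct and follows essentially the same route as the paper: represent $SL_2\ltimes H_d$ on $L^p(T^*\R^d)$ by composition with the (volume-preserving, hence $L^p$-isometric) affine symplectic flows, establish strong continuity by density of $C_c$ and dominated convergence, intertwine the piecewise-constant Liouville dynamics with the endpoint map of \eqref{eq:SL}, and transport \cref{thm:SL}. The paper phrases this as a transcription of the proof of \cref{thm:schro}, adding only the observation that here the representation descends to $SL_2$ itself (no metaplectic cover needed) since $\phi^{2\pi k}_{(|p|^2-|q|^2)/2}=I$, a point your argument implicitly and correctly sidesteps by working with $SL_2\ltimes H_d$ directly.
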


This result give some first insights on the controllability of Liouville Hamiltonian equations. They are obtained thanks to the specific nature of the Hamiltonian function \eqref{eq:hamiltonian}. Extensions of such technique to more general Hamiltonians will be the subject of future investigations.

\medskip

The article is organised as follows: in Section \ref{sec:compatible} we recall a useful compatibility property, and in Sections \ref{sec:thm1}, \ref{sec:thm2}, \ref{sec:thm3} we prove resp. Theorems \ref{thm:SL}, \ref{thm:schro}, and \ref{thm:liouville}.
\section{Compatible elements on Lie groups}\label{sec:compatible}

 In this section we recall a property (found by the first author in \cite[Corollary 1]{good-lie-brackets} for general control affine systems) in the specific Lie group framework introduced above.

We associate to the Lie algebra $\li:=\Lie \: \{b_1,...,b_k\}$ its Lie group
$$L:=\{e^{t_1X_1}...e^{t_mX_m} \: | \: m \in \N^{*},X_1,...,X_m \in \li,t_1,...,t_m\in\R\}.$$

 The following result is a particular case of \cite[Corollary 1]{good-lie-brackets}.

\begin{theorem} \label{th1}
    For $\tau \geq 0$, every convex combination of elements the form $Ad_{L}(\tau a) + \li$ is compatible with system (\ref{eq1}) at time $\tau$.
\end{theorem}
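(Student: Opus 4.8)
The plan is to exploit the two structural facts available to us: that every element of $\li$ is a good Lie bracket, so that the whole group $L$ lies in $\overline{\A_{st}}$ and can be approximated in arbitrarily small time, and that the drift $a$ can be transported by such maneuvers through conjugation. I would first record that $\overline{\A_{st}}$ is a subsemigroup of $G$ containing every $e^{vX}$ with $X\in\li$, $v\in\R$, together with their inverses $e^{-vX}$; concatenating short approximate trajectories and using left-invariance with continuity of the group multiplication then shows $\overline{\A_{st}}$ is closed under products, hence $L\subseteq\overline{\A_{st}}$.

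The core building block is the single-conjugate case: for $g\in L$ and $\tau\ge 0$, the element $e^{\tau\Ad_g(a)}=g\,e^{\tau a}\,g^{-1}$ is approximately reachable with the drift acting for time $\tau$. I would realize it by a three-phase trajectory: approximately steer to $g$ in a short time $\delta_1$ (possible since $g\in\overline{\A_{st}}$, the drift being negligible on a short interval), then switch off the controls and let the drift act for time $\tau$, appending the factor $e^{\tau a}$ on the right by left-invariance, and finally approximately steer by $g^{-1}$ in a short time $\delta_2$. The endpoint is within any prescribed error of $g\,e^{\tau a}\,g^{-1}$, the drift having been used for time exactly $\tau$ and the maneuvers for a total time $\delta_1+\delta_2$ that can be driven to $0$.

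To pass from a single conjugate to a convex combination $\tau\sum_{j=1}^m\lambda_j\Ad_{g_j}(a)+Y$ (with $\sum_{j=1}^m\lambda_j=1$, $\lambda_j\ge 0$, $g_j\in L$, $Y\in\li$), I would invoke the Lie product formula
$$e^{\tau\sum_{j=1}^m\lambda_j\Ad_{g_j}(a)+Y}=\lim_{n\to\infty}\Big(\prod_{j=1}^m e^{\frac{\tau\lambda_j}{n}\Ad_{g_j}(a)}\;e^{Y/n}\Big)^{n}.$$
Each factor $e^{\frac{\tau\lambda_j}{n}\Ad_{g_j}(a)}$ is approximately reachable with drift time $\frac{\tau\lambda_j}{n}$ by the building block, while each $e^{Y/n}$ is small-time approximately reachable since $Y\in\li$. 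The crucial bookkeeping is that the drift time per block equals $\frac{\tau}{n}\sum_{j=1}^m\lambda_j=\frac{\tau}{n}$, so the $n$ blocks consume drift time exactly $\tau$, whereas the total maneuver time stays negligible; concatenating these approximately reachable factors and using left-invariance yields a point arbitrarily close to the $n$-th Trotter product.

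The step I expect to be the main obstacle is controlling the accumulation of errors across the two limits $n\to\infty$ and $\delta\to 0$, since the product contains $n$ blocks each assembled from finitely many only-approximately-reachable factors. I would handle this by fixing the order of the limits: given a target accuracy $\e$, first choose $n=N$ so that the Trotter product is $\e/2$-close to $e^{\tau\sum_{j=1}^m\lambda_j\Ad_{g_j}(a)+Y}$, and only then, for this fixed finite product of $N(m+1)$ factors, use continuity of multiplication in $G$ to approximate each factor well enough that the attained point is $\e/2$-close to it. This bounds the total error by $\e$ while keeping the drift time equal to $\tau$ and the maneuver time as small as desired; a final continuity adjustment shortening the drift duration absorbs the vanishing maneuver time, so that $e^{\tau\sum_{j=1}^m\lambda_j\Ad_{g_j}(a)+Y}$ is reached at time exactly $\tau$, which is precisely compatibility at time $\tau$.
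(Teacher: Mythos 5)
Your proposal is correct and follows essentially the same strategy as the paper: first show $L\subseteq\overline{\A_{st}}$ via the driftless subsystem, then obtain $e^{\Ad_g(\tau a)}=g\,e^{\tau a}\,g^{-1}$ by the steer--drift--steer sandwich, and finally pass to convex combinations. The only difference is that where you spell out the convexity step by hand with a Lie--Trotter product and an explicit order of limits, the paper simply invokes the standard relaxation result (\cite[Proposition 8.1]{AS}), whose proof is the same fast-switching argument.
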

\begin{proof}
The driftless system 
\begin{equation} \label{eq2}
    \dot{q}(t)=q(t)\sum_{i=1}^k u_i(t) b_i \qquad q\in G, b_i \in \g, u_i(t) \in \R,
\end{equation}
is compatible with system (\ref{eq1}). More precisely, every $q \in L$ is small-time approximately reachable by (\ref{eq1}): indeed, thanks to \cite[Lemma 6.2]{lierank}, $q=e^{s_Nb_{i_N}}\dots e^{s_1b_{i_1}}$ for some $s_1,\dots,s_N\in \R, N\in \N$, $\{i_1,\dots,i_N\}\subset \{1,\dots,k\}$. Consider then a piecewise constant control $u$ defined as $u_{i_1}(t)\equiv s_1/t_1$ for a time interval $[0,t_1]$, ..., $u_{i_N}(t)\equiv s_N/t_N$ for a time interval $[0,t_N]$ : the associated solution writes $q(t_1+\dots +t_N,u)=e^{t_Na+s_Nb_{i_N}}\dots e^{t_1+s_1b_{i_1}}$ hence for $t_1,\dots,t_N$ small enough $q(t_1+\dots +t_N,u)$ is close to $q$.

Moreover, $e^{\tau a}$ is reachable in time $T=\tau$, using a free evolution of the system (i.e. a control $u=0$ on a time interval of size $\tau$). Let $q\in L$, then $qe^{\tau a}q^{-1}$ is approximately reachable in time $T= \tau+\e$ for every $\e > 0$. Thanks to the properties of the exponential map on a Lie group,
\begin{equation*}
    q e^{\tau a}q^{-1}=e^{\Ad_{q}(\tau a)}.
\end{equation*}
Thus $\Ad_{q}(\tau a)$ is compatible with (\ref{eq1}). Finally, according to standard relaxation technique, every convex combination of compatible vector fields is also compatible (see e.g. \cite[Proposition 8.1]{AS}). 
\end{proof}
We recall the following corollary of Krener's theorem for Lie Bracket Generating systems (see e.g. \cite[Corollary 8.1]{AS}).
\begin{prop} \label{prop1}
    If a system defined on a finite dimensional manifold is approximately controllable and Lie Bracket Generating then it is controllable.
\end{prop}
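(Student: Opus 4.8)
The plan is to obtain the statement as a consequence of Krener's theorem, which for a Lie bracket generating system asserts that the attainable set $\A(q)$ from any point $q$ satisfies $\A(q)\subseteq\overline{{\rm int}\,\A(q)}$; in particular ${\rm int}\,\A(q)\neq\emptyset$. I would take this as the available input (citing \cite{AS}) and spend the argument only on combining it with approximate controllability. Writing the system on the manifold $M$ as $\dot q=f_0(q)+\sum_{i=1}^k u_i f_i(q)$, I fix two arbitrary points $q_0,q_1\in M$; since controllability means $\A(q_0)=M$ for every $q_0$, it suffices to prove $q_1\in\A(q_0)$.

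The key auxiliary object is the time-reversed system $\dot q=-f_0(q)+\sum_{i=1}^k v_i f_i(q)$, obtained by reversing the drift and absorbing the sign of the controls into the new controls $v_i$. Because $\Lie\{-f_0,f_1,\dots,f_k\}=\Lie\{f_0,f_1,\dots,f_k\}$, its evaluation still spans $T_qM$ at every $q$, so the reversed system is again Lie bracket generating. Its trajectories are exactly the trajectories of the original system run backwards in time, so its attainable set $\A^-(q_1)$ from $q_1$ is precisely the set of points from which $q_1$ can be reached by the original system. Applying Krener's theorem to the reversed system yields ${\rm int}\,\A^-(q_1)\neq\emptyset$.

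Now I invoke approximate controllability: $\overline{\A(q_0)}=M$, so $\A(q_0)$ is dense and must meet the nonempty open set ${\rm int}\,\A^-(q_1)$. Choosing $\bar q$ in this intersection, I obtain a controlled trajectory steering $q_0$ to $\bar q$ and, since $\bar q\in\A^-(q_1)$, one steering $\bar q$ to $q_1$; concatenating them gives $q_1\in\A(q_0)$. As $q_0,q_1$ were arbitrary, the system is controllable. The only genuinely delicate point is the correct handling of the reversed dynamics — checking that reversing the drift preserves the bracket-generating condition (so that Krener applies to the backward system) and that backward reachability for the original system coincides with forward reachability for the reversed one. Everything else, namely the intersection of a dense set with a nonempty open set and the concatenation of trajectories, is routine.
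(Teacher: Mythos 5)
Your argument is correct and is exactly the standard proof of this statement: the paper itself gives no proof, merely citing \cite[Corollary 8.1]{AS}, and your route (Krener's theorem applied to the time-reversed system, then intersecting the dense attainable set with the nonempty-interior backward-reachable set and concatenating) is the argument behind that cited corollary. The two delicate points you flag — that reversing the drift preserves the bracket-generating condition, and that backward reachability for the original system is forward reachability for the reversed one — are both handled correctly.
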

\section{Proof of Theorem \ref{thm:SL}}\label{sec:thm1}
Owing to \eqref{eq:semi-direct-product}, we have
$$[a,X_i]=Y_i,[b,X_i]=0,[c,X_i]=X_i,[a,Y_i]=0,[b,Y_i]=X_i,[c,Y_i]=-Y_i, \quad i=1,\dots,d,$$ 
$$[a,Z]=[b,Z]=[c,Z]=0,$$ 
where we simply denote $a$ and $X$ the elements $(a,0)$ and $(0,X)$ for any $a\in \frak{sl}_2, X\in \frak{h}_d$.
\subsection{Proof of the second statement of Theorem \ref{thm:SL}}
Thanks to Iwasawa decomposition, every element of $SL_2$ can be written in the form $e^{t_1(a-b)}e^{t_2 c}e^{t_3b}$ where $t_1,t_2,t_3 \in \R$ (and this decomposition is unique).

 System (\ref{eq:SL}) can then be lifted to every covering space of $SL_2\ltimes H_{d}$ (notice that $H_{d}$ is simply connected). The Iwasawa decomposition gives the universal simply connected covering space: $\R^3 \rightarrow SL_2$, $(t_1,t_2,t_3)\mapsto e^{t_1(a-b)}e^{t_2 c}e^{t_3b}$. The covering space of degree $m\in \N^*$ is obtained by identification of elements $e^{t(a-b)}$ for every $t\in 2\pi m \Z$.
\begin{theorem} \label{th3}
    The elements $b$ and $c$ are good Lie brackets and for every $t_1\geq0$, $e^{t_1(a-b)}$ is small-time approximately reachable for system (\ref{eq:SL}) defined on any covering space of $SL_2$. System \eqref{eq:SL} with $u_i,r\equiv 0, i\in\{1,\dots,d\}$ is small-time controllable on every covering space of $SL_2$ except the universal one.  
\end{theorem}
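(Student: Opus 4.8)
The plan is to establish the three assertions in turn, working with $\li = \Lie\{b\} = \Span\{b\}$ and $L = \{e^{tb} : t \in \R\}$ as the only available bracket directions (since $b$ is the unique control), and combining Theorem \ref{th1} with a conjugation trick that decouples the drift coefficient from the elapsed time. First I would settle the good brackets. Since $b \in \li$, it is a good Lie bracket by the cited result. For $c$, I would compute $\Ad_{e^{tb}}(a) = e^{t\,\ad_b}(a) = a + tc - t^2 b$ (using $\ad_b a = c$, $\ad_b c = -2b$, $\ad_b^3 a = 0$), so that $\Ad_{e^{tb}}(\tau a) = \tau a + \tau t c - \tau t^2 b$. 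By Theorem \ref{th1}, adding $sb \in \li$ with $s = \tau t^2$ shows $e^{\tau a + \tau t c}$ is approximately reachable at time $\tau$. Fixing $v = \tau t$ and letting $\tau \to 0^+$ (so $t = v/\tau \to \infty$), the point $e^{\tau a + v c}$ converges to $e^{vc}$ while the time tends to $0$; a diagonal argument then gives $e^{vc} \in \overline{\A_{st}}$ for every $v$, so $c$ is a good Lie bracket.

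The heart of the matter is reaching the elliptic element $e^{t_1(a-b)}$ in small time, and here I would use that $c$ is a good bracket to rescale the drift. Since $\ad_c a = -2a$ and $\ad_c b = 2b$, one has $\Ad_{e^{sc}}(a) = e^{-2s} a$ and $\Ad_{e^{sc}}(b) = e^{2s} b$, hence
$$e^{sc}\, e^{\tau a + \lambda b}\, e^{-sc} = \exp\!\big(\tau e^{-2s} a + \lambda e^{2s} b\big),$$
where $e^{\tau a + \lambda b}$ is approximately reachable at time $\tau$ (Theorem \ref{th1} with $t = 0$) and the conjugating factors $e^{\pm sc}$ cost arbitrarily small time because $c$ is a good bracket. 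Given $t_1 > 0$, I would choose $\tau e^{-2s} = t_1$ and $\lambda e^{2s} = -t_1$ (so $s \to -\infty$, $\lambda \to -\infty$ as $\tau \to 0$); the conjugated element is exactly $e^{t_1(a-b)}$ and is reached in total time tending to $0$, giving $e^{t_1(a-b)} \in \overline{\A_{st}}$ for all $t_1 \geq 0$.

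This step is where I expect the main obstacle to lie: because the drift can only be run forward in time, the solvable directions $\Span\{b,c\}$ alone never produce the compact one-parameter subgroup generated by the elliptic element $a-b$, and the conjugation by $e^{sc}$ is precisely what converts a long, cheap excursion along $c$ into a large drift displacement performed in vanishing time. I would then assemble small-time controllability on the finite covers. Computing $\ad_{a-b}$ on $\Span\{a+b,\,c\}$ (where $a+b \mapsto -2c$, $c \mapsto 2(a+b)$) shows its eigenvalues are $0, \pm 2i$, so $e^{t(a-b)}$ is periodic, with period $2\pi m < \infty$ on the cover of degree $m$; hence $\{e^{t_1(a-b)} : t_1 \geq 0\}$ fills the whole compact subgroup $E \cong S^1$, and in particular the negative rotations also lie in $\overline{\A_{st}}$. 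Thus $\overline{\A_{st}}$ is a closed subsemigroup containing the subgroups $E$, $\exp(\R b)$, $\exp(\R c)$, whose Lie algebras span $\sll_2$; since a semigroup containing a family of subgroups contains the group they generate, $\overline{\A_{st}} = G$, i.e. the system is small-time approximately controllable.

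To upgrade to exact controllability I would use that the system is Lie bracket generating, as $\Lie\{a,b\} = \sll_2$. By Krener's theorem each $\A^{\le T}$ has nonempty interior $U$, and since $\A_{st} \subseteq \A^{\le T}$ gives $\overline{\A^{\le T}} \supseteq \overline{\A_{st}} = G$, the set $D := \A^{\le T/2}$ is dense. Then for any $g$ the open set $gU^{-1}$ meets $D$, so $g \in D\cdot U \subseteq \A^{\le T/2}\cdot\A^{\le T/2} \subseteq \A^{\le T}$; as $T>0$ is arbitrary, $g \in \A_{st}$ and hence $\A_{st} = G$. Finally, on the universal cover the periodicity argument fails — $\{e^{t_1(a-b)}:t_1\ge 0\}$ is only a half-line, so the elliptic subgroup is never recovered and $\overline{\A_{st}}$ remains a proper semigroup — which is exactly why the statement excludes the universal cover.
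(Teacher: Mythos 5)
Your argument is correct and follows essentially the same route as the paper: $c$ is obtained as a good bracket from $\Ad_{e^{vb}}(\tau a)=\tau a+v\tau c-v^2\tau b$ with $v=r/\tau$, the elliptic element $e^{t_1(a-b)}$ is reached in vanishing time by using $\Ad_{e^{sc}}(a)=e^{-2s}a$ to rescale the drift (the paper packages this as a second application of Theorem \ref{th1} with $\li=\Span\{b,c\}$ rather than as an explicit conjugation, and concludes via the Iwasawa decomposition rather than your ``semigroup containing subgroups'' argument, but these are the same mechanism), and the upgrade to exact controllability is Krener's corollary, which the paper simply cites as Proposition \ref{prop1}. The only blemish is an indexing slip in your Krener step --- with $U$ the interior of $\A^{\le T}$ you only get $g\in\A^{\le 3T/2}$; take $U$ to be the (nonempty) interior of $\A^{\le T/2}$ instead --- which is immaterial since $T$ is arbitrary.
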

\begin{proof}
We apply \cref{th1} with $\li=\Span \: \{b\}$ and $L=\{e^{vb} \: | \: v\in \R\}$. Every element of the form $\Ad_{e^{vb}}(\tau a)+ub$ with $u,v\in \R, \tau>0$ is compatible. Thanks to the properties of the exponential map on a Lie group,
\begin{align*}
    \Ad_{e^{vb}}(\tau a)&=e^{\ad_{vb}}(\tau a) 
    =\sum_{k=0}^{+\infty}\frac{v^k}{k!}(ad_b)^k(\tau a) 
    =\tau a + v\tau c - v^2\tau  b.
\end{align*}
By taking $v=r/\tau,$ the elements $\tau a+rc$ are compatible for every $r \in \R$, hence $c$ is a good Lie bracket.
Then, the following system
\begin{equation} \label{eq5}
    \dot{q}=q(a+ub+vc) \qquad u,v \in \R,
\end{equation}
is compatible with system (\ref{eq1}).  We apply \cref{th1} to the system (\ref{eq5}), with $\li=\Span \: \{b,c\}$. Then every element of the form $\Ad_{e^{ub+vc}}(a)+sb+tc$, $u,v,s,t\in \R$ is compatible. In particular, $\Ad_{e^{vc}}(a)=\exp(\ad_{vc}(a))=e^{-2v}a$ is compatible for every $v\in \R$. Thus $wa$ is also compatible for every $w>0$. Then every element of the form $e^{t_1(a-b)}e^{t_2c}e^{t_3b}$, $t_1\geq0$, $t_2,t_3 \in \R$ is small-time reachable by the compatible system, thus it is small-time approximately reachable by system (\ref{eq1}). If the considered cover is not the universal one, for every $t\in \R$ there exists $t'\geq0$ such that $e^{t(a-b)}=e^{t'(a-b)}$. Hence (\ref{eq1}) is small-time approximately controllable on every cover except the universal one. Moreover, system (\ref{eq:SL}) is Lie Bracket Generating so, according to \cref{prop1}, it is small-time controllable on every covering space of $SL_2$ except the universal one. 
\end{proof}
\subsection{Proof of the first statement of Theorem \ref{thm:SL}}
Thanks to \cite[Lemma 6.2]{lierank}, and since (\ref{eq:SL}) is Lie Bracket Generating, we are left to prove that $X_i,Y_i, i=1,\dots,d$ and $Z$ are good Lie brackets. It is clear that $X_i,i\in \{1,\dots,d\},Z$ are good Lie brackets. Thanks to Theorem \ref{th1}, every element of the form $\Ad_{e^{vX_i}}(a)+uZ$ with $u,v\in \R$ is compatible. Thanks to the properties of the exponential map on a Lie group,
\begin{align*}
    \Ad_{e^{vX_i}}(\tau a)&=e^{\ad_{vX_i}}(\tau a) 
    =\sum_{k=0}^{+\infty}\frac{v^k}{k!}(ad_{X_i})^k(\tau a) 
    =\tau a + v\tau Y_i - \frac{v^2\tau}{2}  Z.
\end{align*}
By taking $v=r/\tau, u=v^2\tau/2,$ the elements $\tau a+rY_i$ are compatible for every $r \in \R,\tau>0$, hence by taking $\tau$ small enough $Y_i$ is a good Lie bracket.

\section{Proof of Theorem \ref{thm:schro}}\label{sec:thm2}

A direct computation shows that 
$$\Lie \: \left\{\frac{i\Delta}{2},\frac{i|x|^2}{2}\right\}={\rm span} \:\left\{\frac{i\Delta}{2},\frac{i|x|^2}{2},\frac{d}{2}+x\cdot \nabla\right\}.$$ Every element of this Lie algebra is an essentially skew-adjoint operator on $L^2(\R^d,\C)$ with domain $\mathcal{C}_c^{\infty}(\R^n,\C).$ Moreover, the application $\pi : \sll_2(\R) \rightarrow \Lie \: \{\frac{i\Delta}{2},\frac{i|x|^2}{2}\}$ defined by $\pi(a)=\frac{i\Delta}{2}$, $\pi(b)=\frac{i|x|^2}{2}$, $\pi(c)=\frac{d}{2}+x\cdot \nabla$ and extended by linearity, is a Lie algebra isomorphism.

In Theorem \ref{th3} we showed the small-time controllability on every covering space of $SL_2$ (except the universal one). In the following section we translate this property for the Schrödinger equation \eqref{eq:schro}.  We denote with $\mathbb{U}(X)$ the group of linear unitary operators on a normed space $X$.
\subsection{Unitary representation of $SL_2(\R)$ and its covering spaces}

\begin{prop} \label{th6}
    There exists a group morphism $f$ from the universal cover $\widetilde{SL}_2$ to $\mathbb{U}(L^2(\R^d,\C))$ such that $f(e^{a})=e^{\pi(a)}$ for every $a\in \frak{sl}(2)$.
Moreover, $f$ is strongly continuous, i.e. for every $\psi \in L^2(\R^d,\C)$ the function $g \in \widetilde{SL}_2 \mapsto f(g)\psi$ is continuous.
\end{prop}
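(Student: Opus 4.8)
\textbf{Proof proposal for Proposition \ref{th6}.}

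The plan is to construct $f$ as the (projective-representation-turned-genuine-representation) integrated form of the Lie algebra homomorphism $\pi$, using the fact that each one-parameter subgroup of $\widetilde{SL}_2$ generated by an element of $\sll_2$ is sent by $\pi$ to a one-parameter group of skew-adjoint operators, whose exponential is unitary by Stone's theorem. First I would verify that each of the three generators $\pi(a)=\frac{i\Delta}{2}$, $\pi(b)=\frac{i|x|^2}{2}$, and $\pi(c)=\frac{d}{2}+x\cdot\nabla$ is essentially skew-adjoint on $\mathcal{C}_c^\infty(\R^d,\C)$; this is already asserted in the text above, so the closures $\overline{\pi(a)},\overline{\pi(b)},\overline{\pi(c)}$ generate strongly continuous one-parameter unitary groups $t\mapsto e^{t\overline{\pi(\cdot)}}$ via Stone's theorem. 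Setting $f(e^{tX}):=e^{t\,\overline{\pi(X)}}$ for $X$ in the generating set is thus well-defined on the corresponding one-parameter subgroups.

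The heart of the argument is to promote this assignment on one-parameter subgroups to a bona fide group morphism on all of $\widetilde{SL}_2$. Since $\widetilde{SL}_2$ is the simply connected Lie group with Lie algebra $\sll_2$, the general integration theorem for Lie algebra representations by (possibly unbounded, skew-adjoint) operators — in the form of Nelson's theorem or the theory of analytic vectors — guarantees that a Lie algebra homomorphism $\pi$ into skew-adjoint operators sharing a common dense domain of analytic vectors integrates to a unique strongly continuous unitary representation $f$ of the simply connected group, with $f(e^{X})=e^{\overline{\pi(X)}}$. The key hypothesis to check is the existence of a common dense set of analytic vectors for the operators $\overline{\pi(a)},\overline{\pi(b)},\overline{\pi(c)}$, equivalently that the Nelson Laplacian $\Delta_{\rm Nel}=\overline{\pi(a)}^2+\overline{\pi(b)}^2+\overline{\pi(c)}^2$ is essentially self-adjoint; the natural candidate for the analytic vectors is the span of Hermite functions (the eigenfunctions of the harmonic oscillator $-\frac12\Delta+\frac{|x|^2}{2}$), which form an orthonormal basis of $L^2(\R^d,\C)$ preserved by the relevant operators.

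I would then record that the resulting $f$ is a group morphism by construction and is strongly continuous, since strong continuity holds on each one-parameter subgroup by Stone's theorem and extends to the whole group by the integration theorem together with the fact that $g\mapsto f(g)\psi$ is continuous whenever it is continuous along the coordinate subgroups of a chart, using a decomposition such as the Iwasawa factorization $e^{t_1(a-b)}e^{t_2c}e^{t_3b}$ already exploited in Theorem \ref{th3}. The identity $f(e^{a})=e^{\pi(a)}$ is immediate from the defining assignment on one-parameter subgroups once one notes that $e^{\pi(a)}$ is shorthand for $e^{\overline{\pi(a)}}$.

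The main obstacle I anticipate is the analytic-vector / essential-self-adjointness verification underpinning the integration step: one must confirm that $\mathcal{C}_c^\infty$ (or the Hermite span) really supplies a common invariant domain of analytic vectors and that no global monodromy obstruction appears, which is exactly why the statement is phrased for the \emph{universal} cover $\widetilde{SL}_2$ rather than for $SL_2$ itself. Once this functional-analytic input is secured, the remainder is the formal application of the Lie-group integration theorem, and the explicit formula $f(e^{a})=e^{\pi(a)}$ follows with no further computation.
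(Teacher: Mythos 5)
Your proposal is correct in spirit and takes a genuinely different route from the paper on the existence part: the paper simply declares the construction of $f$ with $f(e^a)=e^{\pi(a)}$ to be ``standard'' and spends all its effort on strong continuity, whereas you actually sketch the standard construction (integration of the skew-adjoint Lie algebra representation via Nelson's theorem on a common domain of analytic vectors), which is more self-contained and, as a bonus, yields strong continuity for free since Nelson's theorem produces a strongly continuous unitary representation of the simply connected group. The paper instead proves strong continuity by hand: it writes a convergent sequence $g_k\to g$ in Iwasawa coordinates $e^{t_1^k(a-b)}e^{t_2^kc}e^{t_3^kb}$, applies the Reed--Simon convergence lemma (\cref{lemm3}) to each of the three factors, and concludes using the fact that a product of strongly convergent sequences of unitaries converges strongly. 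Your closing remark that continuity ``along the coordinate subgroups of a chart'' suffices is exactly this argument, but note that it is not separate continuity alone that does the job --- one needs the uniform boundedness of the unitaries to pass from strong convergence of each Iwasawa factor to strong convergence of the product; you should make that telescoping estimate explicit if you take this route rather than leaning on the integration theorem.

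Two caveats on the functional-analytic input you flag. First, $\mathcal{C}_c^\infty(\R^d,\C)$ is \emph{not} a domain of analytic vectors for $\frac{i\Delta}{2}$ (analytic vectors of the Laplacian cannot be compactly supported), so you must commit to the span of Hermite functions, which is the classical choice: it is invariant under $\pi(a),\pi(b),\pi(c)$, consists of analytic vectors for the oscillator representation, and makes the Nelson Laplacian essentially self-adjoint. Second, the monodromy issue you mention is not an obstruction to be ``ruled out'' but the reason the statement lives on $\widetilde{SL}_2$: the paper's \cref{lem:metaplectic} computes exactly this monodromy, $f(w_k(1))=(-1)^{kd}I$, to decide which covers carry the representation. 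With the Hermite-span domain fixed, your argument is a valid and arguably more complete proof than the paper's.
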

\begin{proof} The construction of an infinite-dimensional unitary representation $f$ satisfying $f(e^{a})=e^{\pi(a)}$ for every $a\in \frak{sl}(2)$ is standard. For the proof of the strong continuity, we use the following lemma (see, e.g., \cite[Theorem VIII.21 \& Theorem VIII.25(a)]{rs1}).
\begin{lemma}\label{lemm3}
    Let $(A_k)_{k\in \N}$, $A$ be self-adjoint operators on a Hilbert space $\mathcal{H}$, with a common core $D$. If $||(A_k-A)\psi||_{\mathcal{H}}\underset{k\rightarrow +\infty}{\longrightarrow}0$ for any $\psi \in D$, then $||(e^{iA_k}-e^{iA})\psi||_{\mathcal{H}}\underset{k\rightarrow +\infty}{\longrightarrow}0$ for any $\psi \in \mathcal{H}$.
\end{lemma}
Consider $(g_k)_{k\in \N}$ a sequence in $\widetilde{SL}_2$ such that $g_k \underset{k \rightarrow +\infty}{\longrightarrow} g$. According to the Iwasawa decomposition there exist three converging sequences in $\R$, $t^k_{j} \underset{k \rightarrow +\infty}{\longrightarrow}t_j$, $j \in \llbracket 1,3 \rrbracket$, such that $g_k=e^{t^k_{1}(a-b)}e^{t_{2}^k c}e^{t^k_{3} b}$ and $g=e^{t_1(a-b)}e^{t_2 c}e^{t_3 b}$. We have
\begin{align*}
    f(g_k)&=f(e^{t^k_{1}(a-b)}e^{t_{2}^k c}e^{t^k_{3} b}) 
    =f(e^{t^k_{1}(a-b)})f(e^{t_{2}^k c})f(e^{t^k_{3} b}) 
    =e^{\pi(t^k_{1}(a-b))}e^{\pi(t_{2}^k c)}e^{\pi(t^k_{3} b)} \\
    &=e^{it^k_{1}\frac{(\Delta - |x|^2)}{2}}e^{t_{2}^k(\frac{n}{2}+x\cdot \nabla)}e^{it_{3}^k\frac{|x|^2}{2}}.
\end{align*}
Clearly, for every $\varphi \in C^\infty_c(\R^d,\C)$, 
\begin{align*}
&\left\|t_1^k\left(\frac{\Delta-|x|^2}{2}\right)\varphi - t_1\left(\frac{\Delta-|x|^2}{2}\right)\varphi\right\|_{L^2},\left\|t_2^k\left(\frac{d}{2}+x\cdot \nabla\right)\varphi - t_2\left(\frac{d}{2}+x\cdot \nabla\right)\varphi\right\|_{L^2},\\
&\left\|t_3^k\left(\frac{|x|^2}{2}\right)\varphi - t_3\left(\frac{|x|^2}{2}\right)\varphi\right\|_{L^2} \underset{k\rightarrow + \infty}{\longrightarrow} 0, 
\end{align*}
Thus thanks to \cref{lemm3}, for every $\varphi \in L^2(\R^n,\C)$, $$||e^{it_{1}^k\frac{(\Delta - |x|^2)}{2}}\varphi - e^{it_{1}\frac{(\Delta - |x|^2)}{2}}\varphi||_{L^2},\|e^{it_{2}^k(\frac{d}{2}+x\cdot \nabla)}\varphi - e^{it_{2}\frac{d}{2}+x\cdot \nabla}\varphi||_{L^2},\|e^{it_{3}^k\frac{|x|^2}{2}}\varphi - e^{it_{3}\frac{|x|^2}{2}}\varphi||_{L^2} \underset{k\rightarrow + \infty}{\longrightarrow} 0.$$
Hence for every $\varphi \in L^2(\R^n,\C)$,
\begin{align*}
    f(g_k)\varphi&=e^{it^k_{1}\frac{(\Delta - |x|^2)}{2}}e^{t_{2}^k(\frac{n}{2}+x\cdot \nabla)}e^{it_{3}^k\frac{|x|^2}{2}}\varphi \\
    &\underset{k\rightarrow + \infty}{\longrightarrow} e^{it_{1}\frac{(\Delta - |x|^2)}{2}}e^{t_{2}(\frac{n}{2}+x\cdot \nabla)}e^{it_{3}\frac{|x|^2}{2}}\varphi=f(g)\varphi,
\end{align*}so $f$ is strongly continuous. $\blacksquare$ \end{proof}

\begin{lemma}\label{lem:metaplectic}
    If $d$ is even, there exists a unitary representation $f:\widetilde{SL}_2 \rightarrow \mathbb{U}(L^2(\R^d))$ such that $f(e^{a})=e^{\pi(a)}$ for every covering space $\widetilde{SL}_2$ of $SL_2$ (including $SL_2$ itself). If $d$ is odd, such a unitary representation $f$ is obtained if $\widetilde{SL}_2$ is a covering space of degree $2m, m\in \N^{*}$, or the universal cover. In particular, if $d$ is odd, we have a unitary representation from the two-sheets cover of $SL_2$, also know as the metaplectic representation.
\end{lemma}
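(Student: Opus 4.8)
The plan is to reduce the statement to the computation of a single unitary operator. By the general theory of covering groups, the universal–cover representation $f$ of \cref{th6} factors through the degree-$m$ cover $\widetilde{SL}_2/\langle\zeta^m\rangle$ if and only if $f(\zeta)^m=\mathrm{id}$, and through $SL_2$ itself if and only if $f(\zeta)=\mathrm{id}$; here $\zeta$ denotes the generator of the kernel of the projection $\widetilde{SL}_2\to SL_2$, which by the discussion following the Iwasawa decomposition is the class of the loop $t\mapsto e^{t(a-b)}$, $t\in[0,2\pi]$, i.e.\ $\zeta=e^{2\pi(a-b)}$ in $\widetilde{SL}_2$. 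The element $a-b$ is the infinitesimal generator of the maximal compact subgroup $SO(2)\subset SL_2$: a direct matrix computation gives $e^{\pi(a-b)}=-\mathrm{id}$ and $e^{2\pi(a-b)}=\mathrm{id}$ in $SL_2$, confirming that $\zeta$ generates the infinite cyclic deck group. Thus everything hinges on the value $f(\zeta)$.

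First I would evaluate $f$ on the one-parameter group generated by $a-b$. Since $\pi(a-b)=\frac{i}{2}(\Delta-|x|^2)=-i\mathcal H$, with $\mathcal H:=\tfrac12(-\Delta+|x|^2)$ the quantum harmonic oscillator, \cref{th6} yields $f(e^{t(a-b)})=e^{t\pi(a-b)}=e^{-it\mathcal H}$, and hence $f(\zeta)=e^{-2\pi i\mathcal H}$. Then I would diagonalize $\mathcal H$ in the Hermite basis of $L^2(\R^d)$, whose eigenvalues are $\frac d2+n$, $n\in\N$. Consequently $e^{-2\pi i\mathcal H}$ acts on the $n$-th eigenspace by the scalar $e^{-2\pi i(d/2+n)}=e^{-i\pi d}=(-1)^d$, so that $f(\zeta)=(-1)^d\,\mathrm{id}$.

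It then remains to assemble the two cases. If $d$ is even, $f(\zeta)=\mathrm{id}$, so $\langle\zeta\rangle\subseteq\ker f$ and $f$ descends to $SL_2$, hence to every covering space. If $d$ is odd, $f(\zeta)=-\mathrm{id}$, so $f(\zeta)^m=\mathrm{id}$ exactly when $m$ is even; thus $f$ descends to the covers of even degree $2m$ (and trivially remains defined on the universal cover), but not to $SL_2$ itself, the minimal admissible quotient being the two-sheeted metaplectic cover.

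The main difficulty is conceptual rather than computational: it is the precise evaluation $f(\zeta)=(-1)^d\,\mathrm{id}$. The nontrivial phase is produced entirely by the zero-point energy $d/2$ in the oscillator spectrum---the analytic manifestation of the metaplectic anomaly (Maslov index). One must be careful that $f$ is a representation of the \emph{universal} cover, so that its value on the central loop is the integrated propagator $e^{-2\pi i\mathcal H}$; it is the half-integer shift $d/2$ of the spectrum, and not merely the $2\pi$-periodicity of its integer part, that distinguishes the even- and odd-dimensional cases.
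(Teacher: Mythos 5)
Your argument is correct and follows essentially the same route as the paper: both reduce the question to evaluating the representation on the central loop generated by $a-b$ and compute $e^{i\pi k(\Delta-|x|^2)}=(-1)^{kd}\,\mathrm{id}$ by diagonalizing the harmonic oscillator in the Hermite basis, the sign coming from the zero-point energy $d/2$. Your phrasing via the deck-group generator $\zeta$ and the criterion $f(\zeta)^m=\mathrm{id}$ is just a cleaner restatement of the paper's check that $f$ is well defined on the homotopy classes $w_k$.
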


\begin{proof} We denote $f:\widetilde{SL}_2 \rightarrow \mathbb{U}(L^2)$ the unitary representation of the universal cover, which exists according to \cref{th6} and which verifies $f(e^{a})=e^{\pi(a)}$. In order to obtain a representation for a covering space of degree $m\in \N^{*}$ of $SL_2$, we have to check that $f$ is well-defined on every homotopy class of the group. Every loop $\gamma \in SL_2$ is homotopic to one of the loops $[0,1]\ni t\mapsto w_k(t)=e^{2\pi kt(a-b)}, k\in\N$. If $L^2(\R^d,\C)\ni\varphi=\underset{{j\in\N^d}}{\sum}c_{j}\varphi_{j}$ is decomposed on the Hilbert basis of Hermite functions $\{\varphi_{j}\}_{j \in \N^d}$, then
    \begin{equation} \label{eq7}
        e^{\frac{it(\Delta-|x|^2)}{2}}\varphi=\sum_{(j_1,\dots,j_d)\in N^d}e^{-it(j_1+...+j_d+\frac{d}{2})}c_{j}\varphi_{j}.
    \end{equation}
According to (\ref{eq7}), we obtain $f(w_k(1))=e^{{i\pi k(\Delta-|x|^2)}}=(-1)^{kd}I$. If $d$ is even, then the definition of $f$ coincide on every $w_k, k\in \N^{*}$. If $d$ is odd, this is true for the loops of even degree, and so we obtain a representation of the covering space of degree $2m$, $m \in \N^*$. 
\end{proof}

\subsection{Conclusion of the proof of Theorem \ref{thm:schro}}
Let $\widetilde{SL_2}$ be the metaplectic group, for which there exists a unitary representation whatever the dimension $d$ of the space is (cf. Lemma \ref{lem:metaplectic}). Since $H_{d}$ is simply connected, the strongly continuous unitary representation $f$ can be extended in a standard way to $\widetilde{SL_2}\ltimes H_{d}$. It satisfies $f(e^{tX_i})=e^{it x_i}, f(e^{tY_i})=e^{t \partial_{x_i}},e^{tZ}=e^{it}, t\in \R$. 

Given an arbitrary state $\psi_1$ in the set of states described in Theorem \ref{thm:schro}, there exists an element $g$ in $\widetilde{SL}_2\ltimes H_{d}$ such that $\psi_1=f(g)\psi_0$. Since the element $g$ is small-time approximately reachable (cf. Theorem \ref{th3}), for any $\delta>0$ there exists a control $u \in PWC([0,\tau],\R^{d+1}), \tau\in[0,\delta],$ such that $|q(u,\tau)-g|<\delta$. Since $f$ is strongly continuous, for any $\varepsilon>0$ there exists $\delta>0$ such that if $|q(u,\tau)-g|<\delta$ then $\|f(q(u,\tau))\psi_0-f(g)\psi_0\|_{L^2}<\varepsilon$. Since $f$ is a morphism and $u$ is piecewise constant, $f(q(u,\tau))\psi_0=\psi(u,\tau,\psi_0)$.  Hence $\|\psi(u,\tau,\psi_0)-\psi_1\|_{L^2}=\|f(q(u,\tau))\psi_0-f(g)\psi_0\|_{L^2}<\varepsilon$, and the proof of Theorem \ref{thm:schro} is concluded.

\section{Proof of Theorem \ref{thm:liouville}}\label{sec:thm3}
For proving Theorem \ref{thm:liouville}, one follows the same exact lines of the proof of Theorem \ref{thm:schro}; we only point out the needed modifications. First of all a direct computation shows that
$${\rm Lie}\{|p|^2/2,|q|^2/2,q_1,\dots,q_d\}={\rm span}\{|p|^2/2,|q|^2/2,q_1,\dots,q_d,p\cdot q,p_1,\dots,p_d,1\}.$$
We thus have an isomorphism of Lie algebras $\pi:\frak{sl}_2\ltimes\frak{h}_{d}\to {\rm Lie}\{|p|^2/2,|q|^2/2,q_1,\dots,q_d\}$, and a group morphism $f:\widetilde{SL_2}\ltimes H_{d}\to \mathbb{U}(L^p(T^* \R^d))$ satisfying $f(e^{a})=\phi^1_{\pi(a)}$ for any $a\in \frak{sl}_2\ltimes\frak{h}_{d}$, and $\phi^t_{H(p,q)}$ acts on $\rho_0\in L^p(T^*\R^d)$ as $\rho_0\circ \phi^t_{H(p,q)}$. Such action is strongly continuous thanks to the dominated convergence theorem. Finally, as a slight difference w.r.t. to Lemma \ref{lem:metaplectic}, notice that $\phi^{t}_{(p^2-q^2)/2}(q_0,p_0)=(q_0\cos(t)+p_0\sin(t),-q_0\sin(t)+p_0\cos(t))$ hence $\phi^{2\pi k}_{(p^2-q^2)/2}=I$, irrespectively of $d$ being odd or even; the corresponding Lemma \ref{lem:metaplectic} for the Liouville representation $f:\widetilde{SL_2}\ltimes H_{d}\to \mathbb{U}(L^p(T^* \R^d))$ hence holds for covering spaces of even and odd dimensions.

\medskip

\textbf{Acknowledgments.} E.P. thanks the SMAI for supporting and the CIRM for hosting the BOUM project ”Small-time controllability of Liouville transport equations along an Hamiltonian field”, where some ideas of this work were conceived.
This research has been funded in whole or in part by the French National Research Agency (ANR) as part of the QuBiCCS project ”ANR-24-CE40-3008-01”.
This project has received financial support from the CNRS through the MITI interdisciplinary programs.

\bibliographystyle{siamplain}
\bibliography{references}
\end{document}